\newif\ifdebug                                                      %
\newcommand{\printname}[1]
   {\smash{\makebox[0pt]{\hspace{-1.0in}\raisebox{8pt}{\tiny #1}}}}
\newcommand{\labell}[1] {\label{#1}{\ifdebug{\printname{#1}}\fi}}
\def \v {{\lambda;\delta_1,\ldots,\delta_k}}
\def \R  {{\mathbb R}}
\def \Z  {{\mathbb Z}}
\def \C  {{\mathbb C}}
\def \CP {{{\mathbb C}{\mathbb P}}}
\def \CP {{\mathbb C}{\mathbb P}}
\def \t  {{\mathfrak t}}
\def \calJ {{\mathcal J}}
\def    \half   {{\frac{1}{2}}}
\def \Symp {{\text{Symp}}}
\DeclareMathOperator \PSL {PSL}
\DeclareMathOperator \GL {GL}
\DeclareMathOperator \id {id}
\DeclareMathOperator \AGL {AGL}
\DeclareMathOperator \GW {GW}
\numberwithin{equation}{section}
\numberwithin{figure}{section}
\let\c@equation\c@figure
\newtheorem{Lemma}[equation]{Lemma}
\newtheorem{Theorem}[equation]{Theorem}
\newtheorem*{thm*}{Theorem}
\newtheorem{Corollary}[equation]{Corollary}
\newtheorem{Claim}[equation]{Claim}
\newtheorem*{Lemma*}{Lemma}
\newtheorem*{Corollary*}{Corollary}
\theoremstyle{remark}
\newtheorem*{Remark*}{Remark}
\theoremstyle{definition}
\begin{document}

\title[Counting toric actions on symplectic four-manifolds]
{Counting toric actions on symplectic four-manifolds}

\date{\today}

\subjclass[2010]{Primary 53D20, 53D35, 14M25; Secondary 53D45, 57S15}

\author{Yael Karshon}
\address{Department of Mathematics, University of Toronto,
Toronto, Ontario, Canada, M5S 3G3.}
\email{karshon@math.toronto.edu}

\author{Liat Kessler}
\address{Department of Mathematics, Physics, and Computer Science, 
University of Haifa, at Oranim, Tivon 36006, Israel}
\email{lkessler@math.haifa.ac.il}

\author{Martin Pinsonnault}
\address{Department of Mathematics, Middlesex College
The University of Western Ontario London, Ontario N6A 5B7 Canada}
\email{mpinson@uwo.ca}

\thanks{The first and third authors are partially supported by 
the Natural Science and Engineering Research Council of Canada.
The second author was partially supported 
by the Center for Absorption in Science, 
Ministry of Immigrant Absorption, State of Israel.}

\begin{abstract}
Given a symplectic manifold, we ask in how many different ways can a torus
act on it.
Classification theorems in equivariant symplectic geometry can sometimes
tell that two Hamiltonian torus actions are inequivalent, but often
they do not tell whether the underlying symplectic manifolds are
(non-equivariantly) symplectomorphic.
For two dimensional torus actions on closed symplectic four-manifolds,
we reduce the counting question to combinatorics, by expressing the manifold
as a symplectic blowup in a way that is compatible with all the torus
actions simultaneously.  
For this we use the theory of pseudoholomorphic curves.

\smallskip

Nous nous int\'eressons aux diff\'erentes actions d'un tore sur une 
vari\'et\'e symplectique donn\'ee.
En g\'eom\'etrie symplectique \'equivariante, les th\'eor\`emes de
classification permettent parfois de distinguer des actions hamiltoniennes
de tores g\'e\-o\-m\'e\-tri\-que\-ment in\'equivalentes. Par contre,
ces th\'eor\`emes ne permettent habituellement pas de d\'eterminer si
les vari\'et\'es symplectiques sous-ja\c{c}entes sont symplectomorphes.
Dans le cas des vari\'et\'es symplectiques de dimension $4$, nous 
r\'eduisons le probl\`eme d'\'enum\'eration des actions toriques 
in\'equivalentes \`a un probl\`eme combinatoire en exprimant la 
vari\'et\'e consid\'er\'ee comme un \'eclatement symplectique qui est 
compatible simultan\'ement avec toutes les actions toriques. Ce r\'esultat 
est obtenu en employant des techniques pseudo-holomorphes.

\end{abstract}

\maketitle

\section{Introduction}
\labell{sec:intro}

An \textbf{action} of a torus $T \cong (S^1)^k$ 
on a symplectic manifold $(M,\omega)$
is a group homomorphism $\rho \colon T \to \Symp(M)$ 
that is smooth in the diffeological sense:  
the map $(a,m) \mapsto \rho(a)(m) =: a \cdot m$ from $T \times M$ to $M$ 
is smooth.
A \textbf{momentum map} for such an action is a map
$\Phi \colon M \to \t^* \cong \R^k$
such that $ d \Phi_j = - \iota(\xi_j) \omega$ for all $j=1,\ldots,k$, 
where $\xi_1,\ldots,\xi_k$ are the vector fields
that generate the torus action.
When considering torus actions on symplectic manifolds, 
we will always assume 
that \emph{the action is faithful}, i.e., $\rho$ is one-to-one; 
that \emph{the manifold $M$ is connected};
and that \emph{the action is Hamiltonian}, 
i.e., that a momentum map exists.
%
If $\dim T = \half \dim M$, the Hamiltonian $T$-action 
is called \textbf{toric}.
If, additionally, $M$ is compact, 
then the image of the momentum map is a unimodular (``Delzant") polytope,
and this polytope determines the triple $(M,\omega,\Phi)$
up to an equivariant symplectomorphism that respects the momentum map;
this is Delzant's theorem \cite{delzant}.
It follows that two toric actions differ by conjugation in $\Symp(M)$ 
and reparametrization of $T$ if and only if their momentum images 
are $\AGL(n,\Z)$-congruent where $n=\dim T = \frac{1}{2}\dim M$, 
that is, they differ by a transformation of the form $x \mapsto Ax+b$
where $A$ is in $\GL(n,\Z)$ and $b \in \R^n$;
we consider such actions as \textbf{equivalent}.

Up to equivalence, a compact symplectic four-manifold has
only finitely many toric actions. We prove this fact
in \cite{kkp}; the proof uses ``soft'' equivariant, algebraic,
and combinatorial methods.
This finiteness result remains true in higher dimensions;
this is proved by Borisov and McDuff in \cite[Proposition 3.1]{mb}.
In dimension four, the number of inequivalent Hamiltonian circle actions
that do not extend to 2-torus actions is also finite;
this is proved by Pinsonnault \cite{pinso}.

If the second Betti number of a compact symplectic four-manifold
is one or two, a ``soft" argument gives the exact number
of inequivalent toric actions; see \cite{YK:max_tori}.
But in more general cases the proofs in \cite{kkp,pinso,mb}
that the number of actions is finite
do not give us the actual number of nonequivalent toric actions; 
in general they do not even determine whether this number is nonzero,
that is, whether toric actions exist.

Whether toric or circle actions exist was determined for symplectic
manifolds that are obtained from a $\CP^2$ of size $\lambda$
by $k$ symplectic blowups of equal size $\epsilon$, for many values of
$\lambda$ and $\epsilon$,  in \cite{kk}, \cite{ke}, and \cite{pinso}: 
if $k \geq 4$ then the manifold does not admit a $2$-torus action, and
if $(k-1)\epsilon \geq \lambda$ then the manifold does not admit a circle
action. (The ``size" of a blowup is $\frac{1}{2\pi}$ times the symplectic
area of the exceptional divisor; the ``size" of $\CP^2$
is $\frac{1}{2\pi}$ times the symplectic area of $\CP^1 \subset \CP^2$.)
The challenge is to show that there are no ``exotic actions":
every action can be made ``consistent with the blow-ups"
so that it comes from an action on $\CP^2$.
The proofs combine ``soft'' and ``hard'' (holomorphic) techniques.

In our more recent work
we show that there are no ``exotic actions" also for arbitrary
blowups of $\CP^2$.  
In fact, 
every compact connected symplectic four-manifold that admits a toric action
and whose second Betti number is $\geq 3$
is (non-equivariantly) symplectomorphic 
to a symplectic manifold that is obtained from $\CP^2$ 
with a multiple of the Fubini-Study form by a sequence of symplectic blowups.
(This follows from Delzant's theorem \cite{delzant}
and observations of \cite[Section 2.5]{fulton} and \cite[Lemma 3]{YK:max_tori};
see \cite[Corollary 2.17]{kkp}.)
Thus, our work yields an algorithm to count the exact number
of inequivalent toric actions on an \emph{arbitrary} 
compact symplectic four-manifold.
In this paper we report on this work, deferring details
to a longer and more leisurely exposition~\cite{kkp2}.

We denote by 
$$ (M_k,\omega_{\lambda;\delta_1,\ldots,\delta_k}) $$
a symplectic manifold that is obtained from
a $\CP^2$ of size $\lambda$ by blowups of sizes $\delta_1,\ldots,\delta_k$.
If such a manifold exists, 
then it is unique up to symplectomorphism, by McDuff's work \cite{isotopy}.

To compare different blowups, it is convenient to fix the underlying
manifold $M_k$.
Once and for all, we fix a sequence $p_1,p_2,p_3,\ldots$ of distinct points
on the complex projective plane $\CP^2$, and we denote by $M_k$
the manifold that is obtained from $\CP^2$ by complex blowups
at $p_1,\ldots,p_k$.  We have a decomposition
$$ H_2(M_k) = \Z L \oplus \Z E_1 \oplus \ldots \oplus \Z E_k $$
where $L$ is the image of the homology class of a line $\CP^1$
in $\CP^2$ under the inclusion map $H_2(\CP^2) \to H_2(M_k)$
and where $E_1,\ldots,E_k$ are the homology classes of the exceptional
divisors. 

Throughout this paper, homology is taken with integer coefficients 
and cohomology is taken with real coefficients.

\bigskip

A \textbf{blowup form} on $M_k$ is a symplectic form for which 
there exist pairwise disjoint 
embedded symplectic spheres in the classes $L,E_1,\ldots,E_k$.

\bigskip

Fix a non-negative integer $k$.
Let $\left< \cdot , \cdot \right>$ denote the pairing
between cohomology and homology on $M_k$.
A vector $(\v)$ in $\R^{1+k}$
\textbf{encodes} a cohomology class $\Omega \in H^2(M_k;\R)$
if $\frac{1}{2\pi} \left< \Omega , L \right> = \lambda$
and $\frac{1}{2\pi} \left< \Omega , E_j \right> = \delta_j$
for $j= 1, \ldots, k$.
\bigskip 

Thus, $\omega_{\v}$ can be taken to be a blowup form on $M_k$
whose cohomology class is encoded by the vector $(\v)$.
It is unique up to a diffeomorphism that acts trivially on the homology, 
as follows from results of Gromov \cite[2.4.A', 2.4.A1']{gromovcurves} 
and McDuff \cite{rational-ruled,isotopy}.

\bigskip
Let $k \geq 3$, and let $\lambda, \delta_1, \ldots, \delta_k$
be real numbers.
The vector $(\lambda ; \delta_1 , \ldots , \delta_k)$
is \textbf{reduced} if
\begin{equation} \labell{conditions-1}
 \delta_1 \geq \ldots \geq \delta_k \quad \text{ and } \quad
   \delta_1 + \delta_2 + \delta_3 \leq \lambda. \nonumber
\end{equation}

In \cite[Theorem 1.4]{2paper} we show the following result:
\begin{Theorem}
Let $k \geq 3$.  Given a blowup form 
$\omega_{\lambda';\delta_1',\ldots,\delta_k'}$ on $M_k$,
there exists a unique reduced vector $(\lambda;\delta_1,\ldots,\delta_k)$
such that
$ (M_k,\omega_{\lambda';\delta_1',\ldots,\delta_k'})
 \cong (M_k,\omega_{\lambda;\delta_1,\ldots,\delta_k}). $
\end{Theorem}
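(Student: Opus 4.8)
The statement asserts the existence and uniqueness of a reduced vector in the $\mathrm{Diff}$-equivalence class (acting trivially on homology) of a given blowup form. Let me think about how to prove this. There are two things: existence of a reduced representative, and uniqueness.

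For existence, I'd use the Cremona-type reflections / the action of the Weyl group. The key moves are (1) permuting the $E_j$'s, which can be realized by diffeomorphisms (since the points $p_1,\ldots,p_k$ are indistinguishable), and (2) a "Cremona move" based at three exceptional classes: the reflection $x \mapsto x - 2\langle x, w\rangle w$ where $w = L - E_i - E_j - E_\ell$ (with $w\cdot w = -2$), which on the level of $(\lambda;\delta_1,\ldots,\delta_k)$ sends $\lambda \mapsto 2\lambda - \delta_i - \delta_j - \delta_\ell$ and adjusts the three $\delta$'s accordingly. These moves are realized by diffeomorphisms acting on homology, and they preserve the property of being a blowup form up to diffeomorphism (via McDuff/Gromov uniqueness). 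So I'd run an algorithm: repeatedly sort the $\delta$'s in decreasing order and, whenever $\delta_1 + \delta_2 + \delta_3 > \lambda$, apply the Cremona move at the top three. One shows this terminates — the obvious candidate monovariant is something like $\lambda$ (or $\lambda - \delta_1$, or a lexicographic quantity), which strictly decreases under a Cremona move when $\delta_1+\delta_2+\delta_3 > \lambda$, while staying bounded below (by positivity constraints coming from the symplectic areas of exceptional classes). When no move applies, the vector is reduced.

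For uniqueness, the point is that two reduced vectors encoding diffeomorphic blowup forms must coincide. Here I'd invoke the finer structure: given a blowup form $\omega$ on $M_k$, the set of classes represented by embedded symplectic spheres of self-intersection $-1$ and $\omega$-area equal to various $\delta$'s is a diffeomorphism invariant, and more usefully, the set of exceptional classes of minimal area, and the class $L$ itself, can be characterized symplectically (e.g. via Gromov–Witten invariants or via the structure of the symplectic cone). Concretely, one shows the reduced vector is determined by $\omega$-areas of a canonically-defined generating set; two reduced vectors that are $\mathrm{Diff}$-related are related by an element of the homology-orthogonal group preserving the canonical class, i.e. a Weyl group element, and the only Weyl group element taking one reduced vector to another reduced vector is the identity — this is the standard "fundamental domain" argument for the Weyl chamber. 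That is, reduced vectors form a fundamental domain for the relevant Weyl group action on the set of encoding vectors of a fixed $\mathrm{Diff}$-class.

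**The main obstacle.** The hard part is not the combinatorial algorithm (termination is a routine monovariant argument) but the passage between the \emph{geometric} equivalence (diffeomorphism acting trivially on homology, i.e. symplectomorphism type) and the \emph{lattice-theoretic} equivalence (Weyl group action on $H_2$). In one direction this uses that Cremona moves are realized geometrically, which rests on pseudoholomorphic-curve technology and McDuff's uniqueness/isotopy results quoted in the excerpt. In the other direction — needed for uniqueness — one must show that \emph{every} diffeomorphism-equivalence between two blowup forms, when read on homology, lies in the subgroup generated by these reflections and permutations (equivalently, the group of lattice automorphisms fixing $L - E_1 - \cdots$-type canonical class and preserving the blowup-form property is exactly this Weyl group). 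Establishing that, together with the classical fact that the reduced region is a strict fundamental domain (no two distinct interior-or-boundary points are Weyl-equivalent, with the boundary inequalities $\delta_i = \delta_{i+1}$ and $\delta_1+\delta_2+\delta_3 = \lambda$ handled by the reflections that fix them), is where the real content lies. I'd structure the proof so that existence is the constructive algorithm above, and uniqueness is reduced to this fundamental-domain statement about the Weyl group, citing \cite{2paper} for the detailed verification.
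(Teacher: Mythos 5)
The paper does not actually prove this theorem here --- it is quoted from \cite{2paper} --- so let me compare your plan with the argument given there. Your existence step is the same as theirs: the Cremona reduction algorithm, with the move realized geometrically. Your uniqueness step, however, takes a genuinely different route. In \cite{2paper} uniqueness is obtained by characterizing the entries of the reduced vector directly as symplectic invariants by means of $J$-holomorphic curves: for a reduced form, $\delta_k$ is the minimum of $\frac{1}{2\pi}\langle[\omega],E\rangle$ over exceptional classes $E$ (this is precisely the role of the lemma quoted in Section 2, i.e.\ Lemma 3.6 of \cite{2paper}), one blows down a minimal-area exceptional sphere using McDuff's blowdown uniqueness, checks the truncated vector is still reduced, and inducts down to the base case. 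Your Weyl-chamber/fundamental-domain argument is a legitimate alternative (close in spirit to the Li--Liu and Li--Li treatments of the symplectic cone), but it requires two substantial inputs that you correctly name without supplying: that every symplectomorphism acts on $H_2(M_k)$ through the group generated by the transpositions $E_i\leftrightarrow E_j$ and the Cremona reflection --- equivalently, that the cone-preserving stabilizer of the canonical class $-3L+E_1+\cdots+E_k$ in $O(H_2)$ is exactly this reflection group, a nontrivial lattice-theoretic theorem, especially for $k\ge 10$ where the group is infinite and one must work inside the Tits cone --- together with the strict fundamental-domain property of the dominant chamber. The geometric characterization in \cite{2paper} bypasses both.

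The one step I would call a genuine gap as written is termination of the reduction algorithm. The monovariant you propose --- $\lambda$ strictly decreases under each Cremona move and is bounded below --- does not imply the algorithm halts: a strictly decreasing sequence of reals bounded below can be infinite. What rescues the argument is a discreteness statement: along the algorithm the entries $\lambda^{(n)}$ and $\delta_i^{(n)}$ are the $\omega$-areas of the classes $w_n(L)$ and $w_n(E_i)$, which have $c_1=3$ and $c_1=1$ respectively and nonzero Gromov--Witten invariants, and by Gromov compactness only finitely many such classes have area below a fixed bound; hence $\lambda^{(n)}$ ranges over a finite set and strict decrease forces termination. Relatedly, ``the Cremona move is realized by a diffeomorphism'' is not by itself enough for the existence half: one must also verify that the pulled-back form is again a blowup form, i.e.\ that the new basis classes $2L-E_i-E_j-E_\ell$, $L-E_i-E_j$, \dots\ are represented by pairwise disjoint embedded symplectic spheres; this is where the pseudoholomorphic input (embeddedness via the adjunction formula and disjointness via positivity of intersections) enters the existence step as well, not only the uniqueness step.
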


Here is our main result about the non-existence of ``exotic actions":

\begin{Theorem} \labell{theorem-2}
Let $k$ be an integer $\geq 3$. Let
$\omega_{\lambda;\delta_1,\ldots,\delta_k}$ be a blowup form on $M_k$
whose cohomology class is encoded by a reduced vector
 $(\lambda;\delta_1,\ldots,\delta_k)$.
Let
\begin{align} \labell{nota3}
\delta & = \lambda - \delta_1 - \delta_2 , \nonumber \\
 a & = \lambda - \delta_2 , \\
 b & = \lambda - \delta_1. \nonumber
\end{align}
Then every toric action
on $(M_k,\omega_{\lambda;\delta_1,\ldots,\delta_k})$
is isomorphic to one that is obtained from a toric action
on $(S^2 \times S^2 , a \omega_{S^2} \oplus b \omega_{S^2})$
by a sequence of equivariant symplectic blowups
of sizes $\delta,\delta_3,\ldots,\delta_k$.
\end{Theorem}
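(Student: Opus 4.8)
The plan is to run a Delzant-theoretic argument driven by pseudoholomorphic curves. Let $T$ act torically on $(M_k,\omega_\v)$ with momentum polytope $\Delta$. First I would choose a $T$-invariant almost complex structure $J$ compatible with $\omega_\v$; by the theory of $J$-holomorphic curves in rational symplectic four-manifolds (Gromov \cite{gromovcurves}, McDuff \cite{rational-ruled, isotopy}), since the vector is reduced, the classes $L-E_1-E_2$, $E_1$, $E_2$, and (if needed) $L-E_1$, $L-E_2$ have nonnegative pairing with $\omega_\v$ and are represented by embedded $J$-holomorphic spheres of square $-1$ or $0$; the ones we want are the invariant configuration of spheres representing the edges/facets of $\Delta$. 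The key point is that in the toric picture the $2$-sphere representing the class $E=L-E_1-E_2$ (when $\delta = \lambda-\delta_1-\delta_2 > 0$) is an isolated invariant symplectic sphere of self-intersection $-1$ whose momentum image is an edge of $\Delta$, and similarly for $E_3, \ldots, E_k$. I would invoke the equivariant blowing-down operation: one can equivariantly blow down along such an invariant exceptional sphere, obtaining a new compact symplectic toric four-manifold whose Delzant polytope is $\Delta$ with the corresponding corner ``unchopped.''

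The core of the argument is then: iterate this equivariant blow-down $k$ times, removing the spheres in classes $L-E_1-E_2, E_3, E_4, \ldots, E_k$ (in that homological bookkeeping), reducing $b_2$ by $1$ each time, until we reach a compact symplectic toric four-manifold $(N,\omega_N,\Phi_N)$ with $b_2(N)=2$. By the classification of compact toric four-manifolds with $b_2=2$ (equivalently, by inspecting which Delzant polygons can arise — they are Hirzebruch trapezoids), and by tracking the cohomology class (hence the momentum polygon up to $\AGL(2,\Z)$) through the blow-downs using the encoding data, $N$ must be $S^2\times S^2$ with the split form $a\,\omega_{S^2}\oplus b\,\omega_{S^2}$ of the stated sizes $a=\lambda-\delta_2$, $b=\lambda-\delta_1$; the areas are forced because blowing up a size-$\delta$ ball subtracts $\delta$ from the two edge lengths meeting the chopped corner, and the reduced inequality $\delta_1+\delta_2+\delta_3\le\lambda$ guarantees all intermediate polygons are genuinely Delzant and the blow-up sizes $\delta,\delta_3,\ldots,\delta_k$ are the ones claimed. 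Reversing the $k$ blow-downs exhibits the original action as obtained from the toric action on $(S^2\times S^2, a\,\omega_{S^2}\oplus b\,\omega_{S^2})$ by equivariant symplectic blowups of sizes $\delta,\delta_3,\ldots,\delta_k$, which is the assertion.

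The main obstacle — and where the ``hard'' pseudoholomorphic input is essential — is showing that the exceptional classes we need to blow down are actually represented by \emph{embedded $T$-invariant} symplectic spheres, i.e., that the action is ``consistent with the blow-ups.'' A priori a toric action only gives us invariant spheres in the classes appearing as facets of its own polytope $\Delta$, and there is no reason those are the classes $L-E_1-E_2, E_3,\ldots,E_k$ adapted to the fixed homological decomposition of $M_k$. The resolution is to argue that, up to an orientation-preserving diffeomorphism acting trivially on homology (which exists by uniqueness of the blowup form, as recalled in the excerpt), one may assume the $J$-holomorphic exceptional spheres of the reduced blowup structure coincide with invariant spheres; concretely, one takes $J$ invariant, studies the moduli spaces $\M(L-E_1-E_2,J)$, $\M(E_j,J)$ — each is a single point or a $\CP^1$ ruling — and uses positivity of intersections together with the adjunction formula to force the unique representatives to be the invariant divisors sitting over the edges of $\Delta$. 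Establishing this compatibility, and then checking that the sequence of equivariant blow-downs stays within the class of Delzant polygons at every stage (using the reducedness inequalities), is the crux; once it is in place the cohomological bookkeeping and the $b_2=2$ base case are routine and follow from Delzant's theorem \cite{delzant} and \cite[Lemma 3]{YK:max_tori}.
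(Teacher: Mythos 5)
Your overall architecture matches the paper's: choose an invariant tamed almost complex structure, produce invariant embedded exceptional spheres (invariance following from positivity of intersections plus $E\cdot E=-1$, exactly as in the paper's Lemma~\ref{lemg}), blow down equivariantly, and land on $(S^2\times S^2, a\,\omega_{S^2}\oplus b\,\omega_{S^2})$ after bookkeeping with the reduced vector. However, at the step you yourself identify as the crux there is a genuine gap. You assert that for an invariant $J$ the moduli spaces $\M(E_j,J)$ and $\M(L-E_1-E_2,J)$ are ``a single point or a $\CP^1$ ruling,'' but an invariant $J$ is precisely a \emph{non-generic} $J$, and for such $J$ a class with $\GW\neq 0$ may a priori be represented only by a cusp curve (a sum of components), in which case there is no irreducible embedded sphere to blow down. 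Adjunction and positivity of intersections control a representative once it exists; they do not produce one. The paper's resolution is an area-minimality/indecomposability argument: any class $A$ with $c_1(A)\geq 1$ representable by a nontrivial $J$-holomorphic sphere for $J$ tamed by a blowup form has $\frac{1}{2\pi}\langle[\omega],A\rangle\geq\delta_k$ (Lemma~\ref{deltak is minimal}), so the class $E_k$, whose area is exactly $\delta_k$, cannot split under Gromov compactness, and $\GW(E_k)\neq 0$ then forces an irreducible representative for \emph{every} tamed $J$ (Theorem~\ref{indecomposable1}, Corollary~\ref{Ek embeds}).

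This also shows why your plan to handle all the classes $L-E_1-E_2, E_3,\ldots,E_k$ simultaneously on $M_k$ would fail: the indecomposability argument only applies to the minimal-area class $E_k$. For instance, for special invariant $J$ the class $E_3$ could degenerate as $(E_3-E_k)+E_k$, where $E_3-E_k$ has $c_1=0$ and square $-2$ and is not excluded by any of the lemmas. The paper therefore peels off one blowup at a time: blow down an invariant sphere in $E_k$, observe that $(\lambda;\delta_1,\ldots,\delta_{k-1})$ is still reduced, and repeat $k-2$ times down to $M_2$; the final blow-down along $L-E_1-E_2$ is handled by a separate claim (combinatorial in the toric case, holomorphic in general), not by the same mechanism. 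Two smaller points: you need $k-1$ blow-downs, not $k$, to get from $b_2=k+1$ to $b_2=2$; and the classes $E_1$, $E_2$, $L-E_1$, $L-E_2$ play no role and need not be represented.
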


Moreover, Theorem \ref{theorem-2} holds if we replace ``toric action"
with ``action of a (not necessarily connected) 
compact Lie group $G$ that preserves $\omega$ and induces 
the identity morphism on $H_2(M)$".

The (equivalence classes of) toric actions on 
$(S^2 \times S^2, a \omega_{S^2} \oplus b \omega_{S^2})$ 
are enumerated by the set of integers $\ell$ 
such that $0 \leq \ell < \frac{a}{b}$; 
their momentum map images are the Hirzebruch trapezoids $H_{a,b,2 \ell}$, 
with height $b$, average width $a$, left edge vertical, 
and right edge of slope $-{1 \over 2\ell}$ or vertical if $\ell=0$
\cite[Theorem~2]{YK:max_tori}; see Figure~\ref{fig:hirz}.

\begin{figure}
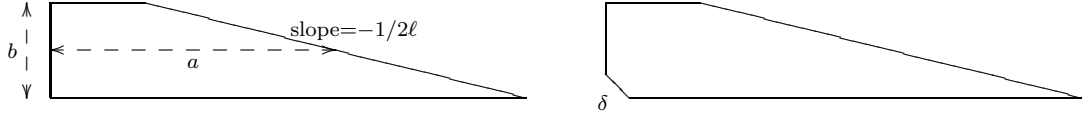

\begin{center}
$$
\xy
0;/r.15pc/: 
(0,0)*{}; (100,0)*{} **\dir{-};
(0,0)*{}; (0,20)*{} **\dir{-};
(0,20)*{}; (20,20)*{} **\dir{-};
{\ar^{\text{slope=$-1/2\ell$}}@{-} (20,20)*{}; (100,0)*{}};
{\ar^{b}@{<-->} (-5,0)*{}; (-5,20)*{} };
{\ar_{a}@{<-->} (0,10)*{}; (60,10)*{} };
\endxy 
\qquad
\xy
0;/r.15pc/: 
(5,0)*{}; (100,0)*{} **\dir{-};
(0,5)*{}; (0,20)*{} **\dir{-};
(0,20)*{}; (20,20)*{} **\dir{-};
{\ar@{-} (20,20)*{}; (100,0)*{}};
{\ar_{\delta}@{-} (0,5)*{}; (5,0)*{} };
\endxy 
$$
\end{center}
\caption{A ``Hirzebruch trapezoid" and the effect of an equivariant blowup}
\labell{fig:hirz}
\end{figure}

The effect of an equivariant blowup of size $\epsilon$ on the momentum
mapping image $\Delta$ amounts to ``chopping off a corner of size $\epsilon$",
see Figure~\ref{fig:hirz}. This can be done at a vertex of $\Delta$ if and
only if all the edges that emanate from that vertex have size $> \epsilon$.
Here, the ``size" of a vector of rational slope is the positive number
such that the vector is equal to that number
times a primitive lattice element.

Given a unimodular (``Delzant") polygon,
choose a cyclic ordering of its edges,
let $u_1,\ldots,u_N$ be the primitive inner normals to the edges,
and let $a_1,\ldots,a_N$ be the sizes of the edges.
There exist integers $k_j$ such that $u_{j+1} + u_{j-1} = -k_j u_j$.
(If the polygon is the momentum image for a toric action
on a symplectic manifold $M$, the preimages of the edges 
are embedded symplectic two-spheres in $M$, 
the $k_j$ are their self intersections,
and the $a_j$ are $\frac{1}{2\pi}$ times their symplectic areas.)
Two polygons are $\AGL(2,\Z)$-congruent
if and only if their corresponding vectors $\{ (k_j,a_j) \}$
differ by a cyclic or anti-cyclic permutation.

Therefore, by Theorem \ref{theorem-2} and Delzant's theorem, 
we obtain the list of (equivalence classes of) toric actions on
$(M_k,\omega_{\lambda;\delta_1,\ldots,\delta_k})$
from the following combinatorial algorithm.
Recursively produce all the polygons that are obtained by a sequence of corner
choppings of sizes $\delta,\delta_3,\ldots,\delta_k$ starting from a
Hirzebruch trapezoid $H_{a,b,2\ell}$, where $a,b,\delta$ are 
as in~\eqref{nota3} and $0 \leq \ell < \frac{a}{b}$.
Use the associate vectors to eliminate repetition up to $\AGL(2,\Z)$.

Here is a sample of some consequences of this algorithm.  
Let $k$ be an integer $\geq 3$.
Let $\omega$ be a blowup form on ${M_k}$
whose cohomology class is encoded by a reduced vector $(\v)$.
\begin{enumerate}
\item
Suppose that
$$ \lambda - \delta_1-\delta_2=\delta_{3} = \delta_4=\delta_5 = \delta_{6},$$
or that  there exists $i \geq 1$ such that
$$ \delta_{i} = \delta_{i+1} = \ldots = \delta_{i+(i+2)}. $$
Then there is no toric action on $(M_k,\omega)$.

\item
The number of toric actions on $(M_k,\omega)$ is at most
\begin{equation} \labell{upper bound}
\left( \left\lceil \frac{\lambda-\delta_2}{\lambda-\delta_1} \right\rceil
        + \left\lceil \frac{\delta_1-\delta_2}{\lambda-\delta_1} \right \rceil
\right)
   \cdot \frac{(k+2)!}{24}.
\end{equation}
This upper bound is achieved if and only if the following conditions (i)--(iv)
hold.
\begin{enumerate}
\item[(i)]
$ \left\lceil \frac{\lambda-\delta_2}{\lambda-\delta_1} \right\rceil
        \cdot (\lambda-\delta_1)  <  \lambda $,
\item[(ii)]
$ \delta_{j+1} + \ldots + \delta_k  <  \delta_j
 \quad \text{ for all } 3 \leq j < k $,
\item[(iii)]
$ \delta_3 + \ldots + \delta_k  <  \lambda - \delta_1 - \delta_2 $,
 \quad \text{ and }
\item[(iv)]
$ \delta_3 + \ldots + \delta_k
        <  \lambda - \left\lceil \frac{\lambda-\delta_2}{\lambda-\delta_1}
                   \right \rceil
                   \cdot (\lambda-\delta_1)$.
\end{enumerate}
For example, this upper bound is  attained 
for $\lambda = 1$, \ $\delta_1 = \delta_2 = \frac{1}{x}$,
and $\delta_i = \frac{1}{x^{i-1}}$ \ for $3 \leq i \leq k$,
where $x \geq \frac{3+\sqrt{5}}{2}$.
\end{enumerate}

\section{Sketch of proof of Theorem \ref{theorem-2}}

We use holomorphic tools in almost complex four-manifolds. 
An \textbf{almost complex structure} on a manifold $M$ is an automorphism $J
\colon TM \to TM$ such that $J^2=-\id$. An almost complex structure $J$
on $M$ is \textbf{tamed} by a symplectic form $\omega$ if $\omega(u,Ju)$
is positive for all nonzero $u \in TM$; let $\calJ_\tau(M,\omega)$
denote the set of almost complex structures that are tamed by $\omega$.
A \textbf{$J$-holomorphic sphere} is a $J$-holomorphic map from
$\CP^1$ to $M$; it is \textbf{simple} if it cannot be factored through a
branched covering of the domain.
The (genus zero) \textbf{Gromov-Witten invariant} 
of a compact four dimensional manifold $(M,\omega)$
associates to every homology class $A \in H_2(M)$ an integer $\GW(A)$.
Let $\kappa(A) = ( A\cdot A + c_1(TM)(A) )/2$.
For generic 
$(J,p_1,\ldots,p_{\kappa(A)}) \in \calJ_\tau(M,\omega) \times M^{\kappa(A)}$,
the integer $GW(A)$ is equal to the number of simple $J$-holomorphic 
spheres
$\CP^1 \to M$ in the class $A$ that pass
through the points $p_1,\ldots,p_{\kappa(A)}$,
modulo reparametrization by an element of $\PSL(2,\C)$,
and counted with appropriate signs.

\bigskip

Let $k \geq 3$.  Let $\omega$ be a blowup form on ${M_k}$
whose cohomology class is encoded by a vector $v=(\v)$ that is reduced.

\begin{Lemma} \labell{deltak is minimal}
Let $A$ be a class in $H_2(M_k)$.
Suppose that $c_1(TM_k)(A) \geq 1$,
and suppose that $A$ is represented by a nontrivial $J$-holomorphic sphere 
for some almost complex structure $J$ that is tamed by some blowup form 
on $M_k$.  Then
\begin{equation} \labell{geq deltak}
 \frac{1}{2\pi} \left< [\omega] , A \right> \geq \delta_k .
\end{equation}
\end{Lemma}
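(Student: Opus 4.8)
The plan is to reduce to the case of a \emph{simple} $J$-holomorphic sphere and then read off the estimate from the two defining inequalities of a reduced vector, using positivity of intersections and the adjunction formula. Write $\omega'$ for a blowup form taming $J$. A nonconstant $J$-holomorphic sphere representing $A$ factors as $u = \widetilde{u}\circ\phi$, with $\phi\colon\CP^1\to\CP^1$ a branched covering of some degree $p\geq 1$ and $\widetilde{u}$ a simple $J$-holomorphic sphere; let $\widetilde{A}$ be its class, so $A = p\widetilde{A}$. Then $c_1(TM_k)(\widetilde{A})$ is a positive integer (since $p\cdot c_1(TM_k)(\widetilde{A}) = c_1(TM_k)(A)\geq 1$ with $p\geq 1$), while $\frac{1}{2\pi}\langle[\omega],A\rangle = p\cdot\frac{1}{2\pi}\langle[\omega],\widetilde{A}\rangle$; hence it suffices to prove the inequality for $\widetilde{A}$. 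So I may assume that $A$ itself is the class of a simple $J$-holomorphic sphere with $c_1(TM_k)(A)\geq 1$.

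Write $A = dL - \sum_i m_i E_i$. Since $\omega'$ is a blowup form, each exceptional class $E_i$ is represented by a $J$-holomorphic curve (a standard fact for rational symplectic four-manifolds), and positivity of intersections then gives, for each $i$, either $A = E_i$ (in which case $\frac{1}{2\pi}\langle[\omega],A\rangle = \delta_i\geq\delta_k$ and we are done) or $A\cdot E_i = m_i \geq 0$; assume the latter for all $i$. Since $c_1(TM_k)(A) = 3d - \sum_i m_i$, this together with $m_i\geq 0$ and $c_1(TM_k)(A)\geq 1$ forces $d\geq 1$ and $\sum_i m_i\leq 3d-1$. Moreover the adjunction formula for a simple $J$-holomorphic sphere in a four-manifold gives $A\cdot A \geq c_1(TM_k)(A) - 2 \geq -1$, that is, $\sum_i m_i^2 \leq d^2 + 1 < (d+1)^2$, whence $0\leq m_i\leq d$ for every $i$.

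It remains to estimate $\frac{1}{2\pi}\langle[\omega],A\rangle = d\lambda - \sum_i m_i\delta_i$. Writing $\delta_i = \delta_3 + (\delta_i-\delta_3)$ and using $m_i\geq 0$ together with $\delta_i\leq\delta_3$ for $i\geq 3$, we get $\sum_i m_i\delta_i \leq \delta_3\sum_i m_i + m_1(\delta_1-\delta_3) + m_2(\delta_2-\delta_3) \leq (3d-1)\delta_3 + m_1(\delta_1-\delta_3) + m_2(\delta_2-\delta_3)$; combining this with the reduced inequality $\lambda\geq\delta_1+\delta_2+\delta_3$ then gives
$$ \frac{1}{2\pi}\langle[\omega],A\rangle \;\geq\; (d-m_1)(\delta_1-\delta_3) + (d-m_2)(\delta_2-\delta_3) + \delta_3 \;\geq\; \delta_3 \;\geq\; \delta_k , $$
where the last two inequalities use $m_1,m_2\leq d$, $\delta_1,\delta_2\geq\delta_3$, and $\delta_3\geq\delta_k$ (the latter because $k\geq 3$).

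The step I expect to be the real obstacle is the one producing $m_i\geq 0$: one needs the exceptional classes $E_i$ to be represented by $J$-holomorphic curves not containing $\widetilde{u}$ as a component, so that positivity of intersections applies cleanly. Establishing this is where the structure theory of exceptional spheres in rational symplectic four-manifolds is needed, and it is presumably also why it suffices for $J$ to be tamed by \emph{some} blowup form rather than by $\omega$ itself. Everything else is bookkeeping with the adjunction inequality and the two inequalities $\delta_1\geq\cdots\geq\delta_k$ and $\delta_1+\delta_2+\delta_3\leq\lambda$ defining a reduced vector.
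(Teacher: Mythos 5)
The paper itself gives no argument here---it defers to \cite[Lemma 3.6]{2paper} and \cite{kkp2}---so I can only measure your proposal against what it would take to make it self-contained. Your overall strategy (reduce to a simple sphere, convert everything to the topological data $A=dL-\sum_i m_iE_i$, and then run the estimate off the two reduced inequalities) is the right one, and your endgame is correct: given $0\leq m_i\leq d$ and $\sum_i m_i\leq 3d-1$, the chain
$d\lambda-\sum_i m_i\delta_i\geq (d-m_1)(\delta_1-\delta_3)+(d-m_2)(\delta_2-\delta_3)+\delta_3\geq\delta_3\geq\delta_k$
checks out, as does the reduction to the simple case and the use of the adjunction inequality to get $|m_i|\leq d$.

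The genuine gap is exactly where you suspect it, and it is not a technicality---it is the core of the lemma. For a non-generic tamed $J$ the class $E_i$ is represented only by a cusp curve $E_i=\sum_j B_j$, and positivity of intersections gives $A\cdot E_i\geq 0$ only for those components whose image differs from the image $C$ of your simple sphere. If $C$ occurs as a component, you get $m_i=A\cdot E_i\geq q\,(A\cdot A)$ with $q\geq 1$, so the argument still closes when $A\cdot A\geq 0$; but when $A\cdot A=-1$ (the only remaining case, by adjunction, and then $c_1(A)=1$, so $A$ is an exceptional class) your dichotomy ``$A=E_i$ or $m_i\geq 0$'' breaks down: $A$ can be an exceptional class with a negative coefficient that is not equal to any $E_i$. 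That residual case---showing that \emph{every} exceptional class of $M_k$ has $\omega$-area at least $\delta_k$ when the vector is reduced---is precisely the content of the cited Lemma 3.6 of \cite{2paper} (hence the label ``$\delta_k$ is minimal''), and closing it requires the structure theory of exceptional classes on rational four-manifolds (e.g.\ Cremona reduction showing that every exceptional class other than the $E_i$ has $d\geq 1$ and all $m_i\geq 0$, or a direct minimality argument for reduced vectors). So your write-up correctly isolates the hard step but does not supply it; everything else is sound.
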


\begin{proof}
See \cite[Lemma 3.6]{2paper} or~\cite{kkp2}.
\end{proof}

\begin{Theorem} \labell{indecomposable1}
Let $A$ be a class in $H_2(M_k)$ such that $c_1(TM_k)(A) > 0$
and $\GW(A) \neq 0$.
Suppose that
$$ \frac{1}{2\pi} \left< [\omega] , A \right> = \delta_k .$$
Then for every almost complex structure $J$ that is $\omega$-tamed
there exists a $J$-holomorphic sphere in the class $A$.
\end{Theorem}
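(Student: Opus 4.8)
The plan is to combine the Gromov–Witten nonvanishing hypothesis with a compactness/positivity-of-intersections argument. Fix an $\omega$-tamed almost complex structure $J$. Since $\GW(A)\neq 0$ and $c_1(TM_k)(A)>0$, a generic choice of $J$ and of $\kappa(A)$ points produces at least one simple $J$-holomorphic sphere in class $A$; letting the almost complex structure vary through a path from a generic one to our given $J$, the Gromov compactness theorem yields a $J$-holomorphic cusp-curve (a stable map) in the class $A$ for our specific $J$. So there is always \emph{some} $J$-holomorphic stable curve in class $A$; the content of the theorem is that this curve is in fact irreducible, i.e.\ an honest (possibly multiply-covered, but we will see not even that) $J$-holomorphic sphere.

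The key step is to rule out bubbling. Suppose the limiting stable curve decomposes as a union of $J$-holomorphic spheres carrying classes $A_1,\ldots,A_m$ with $\sum A_i = A$ and $m\geq 2$, each $A_i$ represented by a nontrivial $J$-holomorphic sphere. Since $M_k$ is a blowup of $\CP^2$, every class represented by a nonconstant $J$-holomorphic sphere has nonnegative $\omega$-area, and has $c_1(TM_k)(A_i)\geq 1$ unless it is (a multiple of) an exceptional class or has negative self-intersection; one analyzes the possible $c_1$ values of the components. Writing $\sum_i c_1(TM_k)(A_i) = c_1(TM_k)(A) > 0$ and pairing with $[\omega]$, one gets $\sum_i \frac{1}{2\pi}\langle[\omega],A_i\rangle = \delta_k$. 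Now apply Lemma~\ref{deltak is minimal}: any component $A_i$ with $c_1(TM_k)(A_i)\geq 1$ satisfies $\frac{1}{2\pi}\langle[\omega],A_i\rangle \geq \delta_k$. If at least one such component exists, its area already accounts for all of $\delta_k$, forcing every other component to have zero $\omega$-area, hence to be constant — contradicting $m\geq 2$. So one must handle the components with $c_1(TM_k)(A_i)\leq 0$: these are (by the structure of $J$-holomorphic spheres in rational four-manifolds, and positivity of intersections) either exceptional classes $E_j$ (with $c_1 = 1$, a case already covered) or classes of the form $E_i - E_j$ type with $c_1 = 0$ and area $\geq 0$, or negative multiples — one shows using the reducedness of the vector $v$ and the constraint $\sum_i \frac{1}{2\pi}\langle[\omega],A_i\rangle = \delta_k$ that no consistent decomposition with $m\geq 2$ survives. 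The reducedness hypothesis $\delta_1\geq\cdots\geq\delta_k$ and $\delta_1+\delta_2+\delta_3\leq\lambda$ is what pins down that $\delta_k$ is genuinely the smallest ``allowed'' area, so it cannot split nontrivially.

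The main obstacle I expect is precisely this combinatorial case-analysis of the possible bubble configurations: one needs a clean argument that, given $\sum A_i = A$, $c_1(TM_k)(A) > 0$, $\frac{1}{2\pi}\langle[\omega],A\rangle = \delta_k$, and each $A_i$ $J$-holomorphically represented, the number of components is forced to be one. The tools are Lemma~\ref{deltak is minimal}, positivity of intersections (two distinct irreducible $J$-holomorphic curves intersect nonnegatively, so $A_i\cdot A_j\geq 0$ for $i\neq j$, whence $A\cdot A \geq \sum_i A_i\cdot A_i$), the adjunction/genus formula to bound self-intersections of embedded components from below, and the explicit intersection form $L^2 = 1$, $E_j^2 = -1$, $L\cdot E_j = 0$, $E_i\cdot E_j = 0$ on $M_k$. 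I would organize it as: (1) produce the stable curve via Gromov compactness; (2) using $c_1(TM_k)(A)>0$ and the list of $J$-holomorphic sphere classes in a rational surface, reduce to finitely many shapes of decomposition; (3) in each shape, apply Lemma~\ref{deltak is minimal} together with the area identity to reach a contradiction with reducedness; (4) conclude the curve is irreducible, hence is the desired $J$-holomorphic sphere in class $A$. The final reference would be to \cite[Theorem 3.\,*]{2paper} or \cite{kkp2} for the full details, but the argument above is the skeleton.
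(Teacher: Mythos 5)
Your proposal contains the paper's entire proof inside its first case, but you fail to notice that this case is the only one, and you then declare the (vacuous) remaining cases to be the ``main obstacle'' and leave them unresolved. Concretely: you correctly write $\sum_i c_1(TM_k)(A_i) = c_1(TM_k)(A) > 0$, but you do not draw the immediate consequence that, since $c_1$ takes integer values, \emph{at least one} summand satisfies $c_1(TM_k)(A_i) \geq 1$. That summand is exactly what Lemma~\ref{deltak is minimal} applies to, so its $\omega$-area is $\geq \delta_k$; since $J$ is $\omega$-tamed, every nonconstant component has strictly positive $\omega$-area, and the areas sum to $\delta_k$, so there is only one component. That is the whole argument, and it is the paper's proof verbatim. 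There is no branch in which every component has $c_1 \leq 0$, so the entire second half of your write-up --- the classification of bubble components in a rational surface, positivity of intersections, adjunction bounds on self-intersections, and the extra appeal to reducedness --- is not needed and should be deleted; reducedness enters only through Lemma~\ref{deltak is minimal} itself.

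Two smaller points. First, your parenthetical suggestion that one should also rule out multiple covers overreaches: the theorem asserts only the existence of a $J$-holomorphic sphere in the class $A$, not that it is simple or embedded (embeddedness is extracted later, in Corollary~\ref{Ek embeds}, via adjunction, and only for exceptional classes). Second, the step ``forcing every other component to have zero $\omega$-area, hence to be constant'' is better phrased as the paper does: all summands are strictly positive because the components are nonconstant and $J$ is tamed, so the sum can equal $\delta_k$ with one summand already $\geq \delta_k$ only if $\ell = 1$.
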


\begin{proof}
Let $J$ be an almost complex structure that is $\omega$-tamed.
Because $\GW(A) \neq 0$,
and by Gromov's compactness theorem \cite[1.5.B]{gromovcurves}, 
there exist classes
$A_1, \ldots, A_\ell \in H_2(M_k)$ such that
$$ A = A_1 + \ldots + A_\ell $$
and such that each $A_j$ can be represented
by a nonconstant $J$-holomorphic sphere.
We claim that $\ell = 1$.
Because $c_1(TM_k)(A) > 0$,
there exists at least one summand $A_j$ such that $c_1(TM_k)(A_j) > 0$.
Fix such an $A_j$.  Then $c_1(TM_k)(A_j) \geq 1$.
By Lemma~\ref{deltak is minimal},
the $j$th summand in the sum
$\sum_{i=1}^\ell \frac{1}{2\pi} \left< [\omega] , A_i \right>$
is $\geq \delta_k$.  But the entire sum is equal to $\delta_k$
and all the summands are strictly positive.
So $\ell = 1$, as required.
\end{proof}

A homology class $E \in H_{2}(M_k)$ is \textbf{exceptional} if it is
represented by an embedded $\omega$-symplectic sphere with self
intersection $-1$. Note that $E_1,\ldots,E_k$ are exceptional.

\begin{Corollary}  \labell{Ek embeds}
For every almost complex structure $J$ that is $\omega$-tamed
there exists an embedded $J$-holomorphic sphere in the class $E_k$.

Moreover,
let $E$ be \emph{any} exceptional class in $H_2(M_k)$ such that
$ \frac{1}{2\pi} \left< [\omega] , E \right> = \delta_k $.
Then for every almost complex structure $J$ that is $\omega$-tamed
there exists an \emph{embedded} $J$-holomorphic sphere in the class $E$.
\end{Corollary}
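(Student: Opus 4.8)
The plan is to derive both statements from Theorem~\ref{indecomposable1} together with the adjunction formula for $J$-holomorphic curves in four-manifolds. First I would observe that the first assertion is the special case $E = E_k$ of the second: the class $E_k$ is exceptional (as already noted), and since the vector $(\v)$ encodes $[\omega]$ we have $\frac{1}{2\pi}\langle[\omega],E_k\rangle = \delta_k$. So it suffices to treat an arbitrary exceptional class $E$ with $\frac{1}{2\pi}\langle[\omega],E\rangle = \delta_k$.

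Fix such an $E$ and an $\omega$-tamed $J$. I would first check that $E$ meets the hypotheses of Theorem~\ref{indecomposable1}. Since $E$ is represented by an embedded $\omega$-symplectic sphere of self-intersection $-1$, the adjunction relation for embedded surfaces gives $c_1(TM_k)(E) = E\cdot E + 2 = 1 > 0$; in particular $\kappa(E) = (E\cdot E + c_1(TM_k)(E))/2 = 0$, so $\GW(E)$ is an unconstrained count of simple $J$-holomorphic spheres in class $E$. The remaining point is that $\GW(E) \neq 0$, which is the standard fact that an exceptional class has Gromov--Witten invariant $\pm 1$. One way to see this is to pick an almost complex structure $J_0$ for which the given embedded symplectic sphere $C$ in class $E$ is $J_0$-holomorphic and to check, using positivity of intersections and automatic regularity (available because $c_1(TM_k)(E) \geq 1$), that $C$ is the unique $J_0$-holomorphic curve in class $E$ and contributes $+1$; alternatively one quotes the structure theory of exceptional classes in rational symplectic four-manifolds. (Details: \cite{2paper,kkp2}.)

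Granting this, Theorem~\ref{indecomposable1} produces a $J$-holomorphic sphere $u\colon \CP^1 \to M_k$ with $[u] = E$, and it only remains to show that $u$ is embedded. First, $u$ is simple: if $u = v \circ \phi$ with $\phi$ a branched cover of degree $m \geq 2$ and $v$ simple in a class $B$ with $mB = E$, then $m^{2}\,(B\cdot B) = E\cdot E = -1$, which is impossible; hence $m = 1$ and $u$ is simple. Now apply the adjunction formula for a simple $J$-holomorphic sphere in a four-manifold, $c_1(TM_k)(E) = E\cdot E + 2 - 2\delta(u)$, where $\delta(u) \geq 0$ counts the singularities of $u$ and vanishes exactly when $u$ is an embedding. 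Substituting $c_1(TM_k)(E) = 1$ and $E\cdot E = -1$ forces $\delta(u) = 0$, so $u$ is an embedded $J$-holomorphic sphere in class $E$, as required.

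The only genuinely nontrivial ingredient is the nonvanishing $\GW(E) \neq 0$ for an \emph{arbitrary} exceptional class (for $E_1,\dots,E_k$ themselves it is immediate); everything else is formal once Theorem~\ref{indecomposable1} and the adjunction formula are in hand. I expect to handle that ingredient either by the uniqueness/regularity argument sketched above or by invoking the known classification of exceptional classes.
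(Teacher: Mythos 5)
Your proposal is correct and follows essentially the same route as the paper: verify $c_1(TM_k)(E)=1$ and $\GW(E)\neq 0$, apply Theorem~\ref{indecomposable1}, and conclude embeddedness from the adjunction formula (your explicit check that the sphere is simple, via $m^2(B\cdot B)=-1$ being impossible, is a worthwhile detail the paper leaves implicit). The only divergence is cosmetic: where you sketch the standard uniqueness/regularity argument for $\GW(E)\neq 0$, the paper simply cites McDuff's ``$C_1$ lemma'' together with Gromov compactness.
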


\begin{proof}
For an exceptional class $E \in H_{2}(M_k)$, we have $c_1(TM_k)(E) = 1$
and, by McDuff's ``$C_1$ lemma'' \cite[Lemma 3.1]{rational-ruled} and
Gromov's compactness theorem \cite[1.5.B]{gromovcurves}, the invariant
$\GW(E)$ is nonzero. Therefore we can apply Theorem \ref{indecomposable1}
to $E$. By the adjunction formula \cite[Corollary 1.7]{nsmall},
a $J$-holomorphic sphere in $E$ is embedded.
\end{proof}

\begin{Lemma} \labell{lemg}
Let $E \in H_2(M_k)$
such that $E \cdot E=-1$.
Let a compact Lie group $G$ act on $M_k$.
Suppose that the $G$ action preserves $\omega$
and induces the identity morphism on  $H_2(M_k)$.
Let $J_G$ be a $G$-invariant $\omega$-tamed almost complex structure on $M_k$.
Let $C$ be an embedded $J_G$-holomorphic sphere in the class~$E$.
Then $C$ is a $G$-invariant $\omega$-symplectic embedded sphere.
\end{Lemma}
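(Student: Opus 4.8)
The plan is to exploit the uniqueness of embedded $J_G$-holomorphic spheres in an exceptional class, together with the $G$-invariance of $J_G$, to force $C$ itself to be $G$-invariant; symplecticity and embeddedness then come essentially for free. First I would recall from Corollary~\ref{Ek embeds} — or rather from the general theory underlying it, McDuff's ``$C_1$ lemma'' \cite[Lemma 3.1]{rational-ruled} and positivity of intersections — that for a class $E$ with $E\cdot E=-1$ and every $\omega$-tamed $J$ there is an embedded $J$-holomorphic sphere in class $E$, and moreover such a sphere is \emph{unique}: two distinct $J$-holomorphic spheres in class $E$ would have intersection number $E\cdot E=-1<0$, contradicting positivity of intersections for $J$-holomorphic curves in dimension four. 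So the embedded $J_G$-holomorphic sphere $C$ in class $E$ is the unique $J_G$-holomorphic curve in that class.

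Next I would use the hypotheses on the $G$-action. Fix $g\in G$ and let $g\colon M_k\to M_k$ denote the corresponding diffeomorphism. Since $g$ preserves $\omega$ and $J_G$ is $G$-invariant, $g$ maps $J_G$-holomorphic curves to $J_G$-holomorphic curves; indeed $g\circ u$ is again $J_G$-holomorphic whenever $u$ is, because $dg\circ J_G = J_G\circ dg$. Hence $g(C)$ is an embedded $J_G$-holomorphic sphere. Its homology class is $g_*[C]=g_*E=E$, using the assumption that $G$ induces the identity on $H_2(M_k)$. By the uniqueness in the previous paragraph, $g(C)=C$ as a subset of $M_k$. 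Since $g\in G$ was arbitrary, $C$ is a $G$-invariant subset.

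It remains to record that $C$ is $\omega$-symplectic and embedded. Embeddedness is immediate — $C$ was chosen embedded, or alternatively follows from the adjunction formula \cite[Corollary 1.7]{nsmall} as in the proof of Corollary~\ref{Ek embeds}, since $c_1(TM_k)(E)=1$. For symplecticity: $C$ is the image of a non-constant $J_G$-holomorphic map, and $J_G$ is $\omega$-tamed, so $\omega$ restricted to the $J_G$-complex tangent lines of $C$ is positive; thus $C$ is a symplectic submanifold. This completes the proof.

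The only real subtlety — and the step I would present with most care — is the uniqueness claim: one must know that an exceptional class is represented by a \emph{single} embedded $J_G$-holomorphic curve with no other $J_G$-holomorphic representatives (not even reducible ones). This is where positivity of intersections in dimension four, applied to the self-intersection $-1$, does the work: any effective decomposition or any second representative would produce a negative local intersection contribution, which is impossible for distinct $J$-holomorphic curves. Everything else is a formal consequence of $G$-invariance of $J_G$ and triviality of the $G$-action on $H_2$.
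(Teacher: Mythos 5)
Your proposal is correct and follows essentially the same route as the paper: the key step in both is that $aC$ and $C$ are $J_G$-holomorphic spheres in the same class $E$ with $E\cdot E=-1<0$, so by positivity of intersections in an almost complex four-manifold their images must coincide, and tameness of $J_G$ gives symplecticity. The only difference is cosmetic — you package the argument as a uniqueness statement before applying it (and the appeal to McDuff's $C_1$ lemma for existence is not needed here, since the sphere $C$ is given as a hypothesis).
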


\begin{proof}
Because $J_G$ is $\omega$-tamed
and $C$ is an embedded $J_G$-holomorphic sphere,
$C$ is an embedded $\omega$-symplectic sphere.

Let $a \in G$.  Because $G$ acts trivially on the homology,
$[aC] = [C] = E$.
By positivity of intersections of J-holomorphic spheres in an almost
complex four-manifold \cite[Proposition 2.4.4]{nsmall}, 
and since $E \cdot E = -1$,
the spheres $aC$ and $C$ must coincide.
Thus, $C$ is $G$-invariant.
\end{proof}

\begin{Corollary} \labell{corg}
Let a compact Lie group $G$ act on $M_k$, preserve $\omega$,
and act trivially on $H_2(M_k)$.
Then there exists a $G$-invariant $\omega$-symplectic embedded sphere $C$
in the class~$E_k$. 
\end{Corollary}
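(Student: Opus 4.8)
The plan is to deduce Corollary~\ref{corg} from Corollary~\ref{Ek embeds} and Lemma~\ref{lemg} by averaging an almost complex structure over the group $G$. First I would invoke Corollary~\ref{Ek embeds}: since $\omega$ is a blowup form, it tames some almost complex structure, and for \emph{every} $\omega$-tamed almost complex structure there is an embedded $J$-holomorphic sphere in the class $E_k$; so the only thing to arrange is that the almost complex structure can be chosen $G$-invariant.

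The key step is therefore to produce a $G$-invariant $\omega$-tamed almost complex structure $J_G$ on $M_k$. The standard construction is: start with any $\omega$-tamed $J_0$ (the set $\calJ_\tau(M_k,\omega)$ is nonempty because $\omega$ is a symplectic form), and average: since $G$ is a compact Lie group it carries a normalized Haar measure, and the set of $\omega$-tamed almost complex structures at each point is an open convex subset of an affine space (after the usual identification, tameness $\omega(u,Ju)>0$ is preserved under convex combinations for a fixed $\omega$), so one can integrate the $G$-translates of $J_0$ against Haar measure — taking care to do the averaging in the correct linear model, for instance by averaging the associated metrics or by using the retraction from $\omega$-compatible metrics to almost complex structures — to obtain a smooth $G$-invariant $J_G \in \calJ_\tau(M_k,\omega)$. (One must check that $G$ preserving $\omega$ makes the $G$-action send $\omega$-tamed structures to $\omega$-tamed structures, which is immediate from the definition.)

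With $J_G$ in hand, apply Corollary~\ref{Ek embeds} to get an embedded $J_G$-holomorphic sphere $C$ in the class $E_k$. Since $E_k \cdot E_k = -1$ and $G$ acts trivially on $H_2(M_k)$ and preserves $\omega$, Lemma~\ref{lemg} applies verbatim and tells us that $C$ is a $G$-invariant embedded $\omega$-symplectic sphere in the class $E_k$. That is exactly the assertion of the corollary.

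I expect the only real subtlety — and it is a routine one — to be the averaging construction: one cannot naively integrate the endomorphisms $a \cdot J_0$ of $TM_k$ and hope that $J^2 = -\id$, so the averaging must be carried out in a model where $\omega$-tamed (or $\omega$-compatible) structures form a convex set, e.g.\ by averaging the corresponding Riemannian metrics $g_{J}(\cdot,\cdot) = \omega(\cdot, J\cdot)$ (symmetrized), which do form a convex cone, and then polar-decomposing back to an almost complex structure. This is a well-known technique, so no genuine obstacle arises; everything else is a direct citation of the preceding results.
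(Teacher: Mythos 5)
Your proposal is correct and follows exactly the route the paper intends: produce a $G$-invariant $\omega$-tamed almost complex structure by averaging, apply Corollary~\ref{Ek embeds} to get an embedded $J_G$-holomorphic sphere in the class $E_k$, and then invoke Lemma~\ref{lemg}. Your careful remark about averaging in a convex model (metrics plus polar decomposition) rather than naively integrating the endomorphisms is the right way to handle the one technical point.
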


Equivariantly blowing down along the sphere $C$ yields a $G$-action 
on $(M_{k-1},\omega_{\lambda;\delta_1,\ldots,\delta_{k-1}})$.
By repeated $k-2$ applications
of Corollary~\ref{corg}, we reduce Theorem \ref{theorem-2} 
to the following claim.
\begin{Claim}
Let $\omega$ be a blowup form on $M_2$,
whose cohomology class is encoded by a vector
$(\lambda;\delta_1,\delta_2)$ with $\delta_1 \geq \delta_2$.
Let a compact Lie group $G$ act on $M_2$, preserve $\omega$, 
and act trivially on $H_{2}(M_2)$.

Then there exists an embedded
$G$-invariant $\omega$-symplectic sphere $D$
in the class $L - E_1 - E_2$, 
and blowing down along $D$ yields a $G$-action on 
$(S^2 \times S^2 , a \omega_{S^2} \oplus b \omega_{S^2})$
where
$a  = \lambda - \delta_2$ and 
 $b = \lambda - \delta_1$.
\end{Claim}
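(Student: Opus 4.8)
The plan is to mimic, one dimension down, the argument that produced the $G$-invariant sphere in the class $E_k$, but now using the class $D_0 := L - E_1 - E_2$ in $M_2$. First I would check the purely cohomological facts: since the vector $(\lambda;\delta_1,\delta_2)$ encodes $[\omega]$, we have $\frac{1}{2\pi}\langle[\omega], L-E_1-E_2\rangle = \lambda - \delta_1 - \delta_2$, which is the quantity $\delta$ of \eqref{nota3} (with $k=2$ there is no $\delta_3$), and in particular it is $\ge 0$ because the vector is reduced. Also $c_1(TM_2)(L-E_1-E_2) = 3 - 1 - 1 = 1 > 0$, and $(L-E_1-E_2)\cdot(L-E_1-E_2) = 1 - 1 - 1 = -1$, so $D_0$ is an exceptional class in the sense defined in the excerpt (it is represented by the proper transform of a line through $p_1$ and $p_2$, which is an embedded symplectic $(-1)$-sphere). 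The role played by $\delta_k$ in Corollary~\ref{Ek embeds} will be played here by $\delta := \lambda - \delta_1 - \delta_2$; I will need the analogue of Lemma~\ref{deltak is minimal} saying that every $c_1 \ge 1$ class carried by a nontrivial $J$-holomorphic sphere (for $J$ tamed by a blowup form on $M_2$) has $\omega$-area $\ge \delta$. This is exactly the $M_2$, reduced-pair version of Lemma~\ref{deltak is minimal}, so I would either invoke it directly (it is stated for general $k$) after noting that on $M_2$ the minimal area among such classes is $\delta$, or quote the corresponding statement from \cite{2paper}.

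Granting that, the structure is: by McDuff's $C_1$-lemma and Gromov compactness, $\GW(D_0)\ne 0$; since $c_1(D_0) = 1 > 0$ and $\frac{1}{2\pi}\langle[\omega],D_0\rangle = \delta$ equals the minimal positive area, the decomposition argument of Theorem~\ref{indecomposable1} applies verbatim to show that for \emph{every} $\omega$-tamed $J$ there is a $J$-holomorphic sphere in the class $D_0$, and by the adjunction formula it is embedded. Now choose a $G$-invariant $\omega$-tamed almost complex structure $J_G$ (these exist by averaging, since $G$ is compact and the taming condition is convex and open) and let $D$ be an embedded $J_G$-holomorphic sphere in $D_0$. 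Lemma~\ref{lemg} (applied with $E = D_0$, which satisfies $E\cdot E = -1$) then gives that $D$ is a $G$-invariant embedded $\omega$-symplectic sphere.

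Finally I must identify the blow-down. Blowing down $M_2$ along the embedded symplectic $(-1)$-sphere $D$ in the class $L - E_1 - E_2$ produces a symplectic four-manifold with $b_2 = 2$; since $G$ preserves $\omega$ and $D$ is $G$-invariant, the blow-down is equivariant and carries the $G$-action. To see the result is $(S^2\times S^2, a\,\omega_{S^2}\oplus b\,\omega_{S^2})$ I would work on homology: contracting $D$ sends the classes $E_1, E_2$ to a basis $B_1, B_2$ of $H_2$ of the blow-down with $B_i\cdot B_i = E_i\cdot E_i + 1 = 0$ (the self-intersection increases by one under a blow-down along a $(-1)$-curve meeting $E_i$ once) and $B_1 \cdot B_2 = E_1\cdot E_2 + 1 = 1$, whereas $L - E_1 - E_2$ and $E_1 + E_2 - $ combinations give $B_1 \cdot B_1 = 0$; this is the $S^2\times S^2$ intersection form rather than $\CP^2\#\ol{\CP^2}$ (one checks the form is even, equivalently the class $B_1$ has square $0$). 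The areas are then read off from $[\omega]$: $\frac{1}{2\pi}\langle[\omega],B_1\rangle = \frac{1}{2\pi}\langle[\omega], L - E_2\rangle = \lambda - \delta_2 = a$ and $\frac{1}{2\pi}\langle[\omega], B_2\rangle = \frac{1}{2\pi}\langle[\omega], L - E_1\rangle = \lambda - \delta_1 = b$, where I use that under the blow-down $B_1, B_2$ pull back to $L - E_2, L - E_1$ on $M_2$ (the proper transforms of two of the lines $p_1 p_2$ fibers). By Delzant-type uniqueness of symplectic structures on $S^2\times S^2$ in a given cohomology class, the blown-down symplectic manifold is $(S^2\times S^2, a\,\omega_{S^2}\oplus b\,\omega_{S^2})$, and the $G$-action descends to it, as claimed.

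The main obstacle I expect is not any single hard step but the bookkeeping in the last paragraph: confirming that the symplectic blow-down of $M_2$ along a sphere in $L - E_1 - E_2$ is genuinely $S^2 \times S^2$ with the stated split form (and not merely diffeomorphic to it), that the identification of classes and areas is the right one, and that the $G$-action really does pass to the quotient with no loss of the invariance/symplectic properties; each of these is standard, but they must be assembled carefully. The ``hard'' holomorphic input—existence of the invariant $J_G$-sphere for \emph{every} tamed $J_G$—is already packaged in Theorem~\ref{indecomposable1} and Lemma~\ref{lemg}, so the real content of the Claim is just checking that $L - E_1 - E_2$ is the $b_2 = 2$ analogue of $E_k$ and then running the same machine.
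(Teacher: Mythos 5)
Your overall architecture (nonvanishing of $\GW(L-E_1-E_2)$, irreducibility of this class for every tamed $J$, adjunction to get embeddedness, averaging to get an invariant $J_G$, Lemma~\ref{lemg} for $G$-invariance, then equivariant blow-down and the homological identification of $S^2\times S^2$ with the split form of areas $a,b$) is exactly the intended route; the paper itself compresses all of this into a citation of \cite[Lemma~2.2]{pinso-2} and \cite{kkp2}, proving the toric case separately by combinatorics. Your blow-down bookkeeping in the last paragraph (total transforms $L-E_2$, $L-E_1$ of the two sphere factors, intersection form $\left(\begin{smallmatrix}0&1\\1&0\end{smallmatrix}\right)$, areas $a$ and $b$) is correct.

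There is, however, a genuine gap in the key holomorphic step. You assert that on $M_2$ the minimal $\omega$-area of a class $A$ with $c_1(A)\ge 1$ carried by a nontrivial $J$-holomorphic sphere is $\delta=\lambda-\delta_1-\delta_2$, so that the proof of Theorem~\ref{indecomposable1} ``applies verbatim'' to $D_0=L-E_1-E_2$. This is false: the classes $E_1$ and $E_2$ have $c_1=1$, are always represented, and have areas $\delta_1,\delta_2$, which can be much smaller than $\delta$ (take $\lambda=1$, $\delta_1=\delta_2=1/10$, so $\delta=8/10$). No reducedness hypothesis is available for $k=2$ (the notion is only defined for $k\ge 3$), and even in the paper's application the condition $\delta_1+\delta_2+\delta_3\le\lambda$ bounds $\delta_3$ by $\delta$, not $\delta_1$ or $\delta_2$. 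Consequently the area-minimality mechanism of Lemma~\ref{deltak is minimal} and Theorem~\ref{indecomposable1} does not by itself rule out a Gromov limit such as $(L-E_1-2E_2)+E_2$. To close the gap you need a different irreducibility argument: in any decomposition $L-E_1-E_2=\sum m_iB_i$ into classes of simple $J$-spheres, each $B_i$ has positive area and satisfies the adjunction inequality $B_i\cdot B_i-c_1(B_i)+2\ge 0$, and a short case analysis (for instance, $L-E_1-2E_2$ is primitive with square $-4$ and $c_1=0$, hence not representable) excludes every nontrivial decomposition. This is precisely the content of \cite[Lemma~2.2]{pinso-2}, which you could cite directly. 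A minor point: the uniqueness of the split symplectic form on $S^2\times S^2$ in a given cohomology class is due to Gromov and McDuff, not to a ``Delzant-type'' theorem, and the equivariant blow-down along the invariant $(-1)$-sphere $D$ deserves at least a sentence (a $G$-invariant tubular neighbourhood of $D$ standardized by the equivariant symplectic neighbourhood theorem).
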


The claim is proved by combinatorial tools in case the action is toric 
and by holomorphic tools in the general case.
See~\cite[Lemma~2.2]{pinso-2} and~\cite{kkp2}.

\subsection*{Acknowledgement}
The authors are grateful for useful discussions with
Paul Biran, Tian-Jun Li, Dusa McDuff, and Dietmar Salamon.

\end{document}